\newtheorem{definition}{Def{}inition}[section]
\newtheorem{theorem}[definition]{Theorem}
\newtheorem{proposition}[definition]{Proposition}
\newtheorem{remark}[definition]{Remark}
\newcommand{\bmu}{\bar{\mu}}
\newcommand{\bnu}{\bar{\nu}}
\newcommand{\btheta}{\bar{\theta}}
\newcommand{\ZZ}{{\mathbb{Z}}}
\newcommand{\CC}{{\mathbb{C}}}
\newcommand{\RR}{{\mathbb{R}}}
\newcommand{\CP}{{\mathbb{CP}}}
\newcommand{\id}{\mathrm{Id}}
\newcommand{\surj}{\twoheadrightarrow}
\newcommand{\inc}{\hookrightarrow}
\newcommand{\too}{\longrightarrow}
\newcommand{\la}{\langle}
\newcommand{\ra}{\rangle}
\newcommand{\bd}{\partial}
\newcommand{\x}{\times}
\newcommand{\GL}{\operatorname{GL}}
\newcommand{\SO}{\operatorname{SO}}
\newcommand{\U}{\operatorname{U}}
\newcommand{\Sp}{\operatorname{Sp}}
\newcommand{\cA}{\mathcal{A}}
\newcommand{\cC}{\mathcal{C}}
\newcommand{\Stab}{\operatorname{Stab}}
\newcommand{\diag}{\mathrm{diag}}
\begin{document}

\title[A complex, symplectic and non-K\"ahler $6$-manifold]{A $6$-dimensional simply connected complex 
and symplectic manifold with no K\"ahler metric}

\author[G. Bazzoni]{Giovanni Bazzoni}
\address{Fakult\"{a}t f\"{ur} Mathematik, Universit\"{a}t Bielefeld, Postfach 100301, D-33501 Bielefeld}
\email{gbazzoni@math.uni-bielefeld.de}

\author[M. Fern\'andez]{Marisa Fern\'{a}ndez}
\address{Universidad del Pa\'{\i}s Vasco,
Facultad de Ciencia y Tecnolog\'{\i}a, Departamento de Matem\'aticas,
Apartado 644, 48080 Bilbao, Spain}
\email{marisa.fernandez@ehu.es}

\author[V. Mu\~{n}oz]{Vicente Mu\~{n}oz}
\address{Facultad de Matem\'aticas, Universidad Complutense de Madrid, Plaza de Ciencias
3, 28040 Madrid, Spain}
\address{Instituto de Ciencias Matem\'aticas (CSIC-UAM-UC3M-UCM),
C/ Nicol\'as Cabrera 15, 28049 Madrid, Spain}
\email{vicente.munoz@mat.ucm.es}

\subjclass[2010]{53D05} 

\begin{abstract}
We construct a simply connected compact manifold which has complex and symplectic structures but does not admit K\"ahler metric, in the lowest 
possible dimension where this can happen, that is, dimension $6$. Such a 
manifold is automatically formal and has even odd-degree Betti numbers 
but it does not satisfy the Lefschetz property for any symplectic form.
\end{abstract}

\maketitle

\section{Introduction} \label{sec:intro}

A K\"ahler manifold $(M, J, \omega)$ is a smooth manifold $M$ of dimension $2n$ endowed with an integrable almost complex structure $J$ and a symplectic form
$\omega$ such that $g(X,Y)=\omega(X,JY)$ def{}ines a Riemannian metric, called \textit{K\"ahler metric}. In order to check that a compact manifold does not carry any K\"ahler metric,
one can use a collection of known topological obstructions to the existence of such a structure:
theory of K\"ahler groups, evenness of odd-degree Betti numbers, Lefschetz property or the formality of the rational homotopy type (see \cite{Amoros,DGMS,Wells}).

If $M$ is a compact K\"ahler manifold, then it has a complex and a 
symplectic structure. However, the converse is not true.
The f{}irst example of a compact manifold admitting complex and symplectic structures but no K\"ahler metric is the Kodaira-Thurston manifold \cite{Kod,Thurston}. 
This $4$-manifold is not simply connected (it is actually a nilmanifold)
hence the fundamental group plays a key role in this property. The classif{}ication of complex 
and symplectic nilmanifolds of dimension 6 was given by Salamon in \cite{Salamon}.
Generalizations to higher dimension $2n\geq 6$ of the Kodaira-Thurston manifold 
are the generalized Iwasawa manifolds considered in \cite{CoFG}.
Such manifolds have complex and symplectic structures but carry no K\"ahler metric.
Note that, in dimension $2$, every oriented surface admits a K\"ahler metric.

If one restricts attention to manifolds with trivial fundamental group, then every complex manifold of real dimension $4$ admits a K\"ahler structure. Indeed, by the Enriques-Kodaira classif{}ication 
\cite{Kod}, if $M$ is a complex surface whose f{}irst Betti number $b_{1}$ is even (this holds in particular when $b_1=0$), then $M$ is deformation equivalent to a K\"ahler surface (see also 
\cite[Theorem 3.1, page 144]{BHPV} for a direct proof of this fact). 
We point out that Gompf \cite{Gompf} has constructed the f{}irst examples of simply connected compact symplectic
but not complex $4$-manifolds. Also Fintushel and Stern \cite{Fintushel-Stern} have given a family of simply connected symplectic $4$-manifolds not admitting complex structures (the latter was 
proved by Park \cite{Park}).

In dimensions higher than 4, we have the following results. The f{}irst examples of simply connected compact symplectic 
non-K\"ahlerian manifolds were given in dimension 6 by Gompf in the 
aforementioned paper \cite{Gompf} and in dimension $\geq 10$ by McDuf{}f in \cite{McDuff} (these examples are not known 
to admit complex structures). Fine and Panov in \cite{FP} (see also \cite{FP_2}) have 
produced simply connected symplectic $6$-manifolds with $c_1=0$ which do not have a compatible complex structure 
(but it is not known if they admit K\"ahler structures). Furthermore, Guan in \cite{Gu} constructed the f{}irst family of simply connected, 
compact and holomorphic symplectic non-K\"ahlerian manifolds of (real) dimension $4n\geq 8$. On the other hand, the f{}irst and third authors have proved \cite{BaMu} 
that the $8$-dimensional manifold $X$ constructed in \cite{FM} is an
example of a simply connected, symplectic and complex manifold which does not admit a K\"ahler structure (since it is not formal). 
For higher dimensions $2n=8+2k$, $k\geq 1$, one can take $X\x \CP^k$. This is simply connected, complex and 
symplectic but not K\"ahler. Thus, a natural question arises:
\begin{quote}
\textit{Does there exist a 6-dimensional simply connected, compact, symplectic and complex manifold which does not admit K\"ahler metrics?}
\end{quote}
In this paper we answer this question in the af{}f{}irmative by proving the following result:

\begin{theorem}\label{main}
There exists a $6$-dimensional, simply connected, compact, symplectic and complex manifold 
which carries no K\"ahler metric.
\end{theorem}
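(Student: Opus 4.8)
The plan is to produce a single closed $6$-manifold $M$ that carries \emph{both} a complex structure and a symplectic form, that is simply connected, and that fails the hard Lefschetz property with respect to \emph{every} symplectic form on it. This last point is what forces non-K\"ahlerness: a simply connected compact $6$-manifold is automatically formal (by Miller's theorem, $1$-connected manifolds of dimension $\le 6$ are formal) and automatically has even odd-degree Betti numbers (Poincar\'e duality gives $b_1=b_5=0$ and makes the cup pairing on $H^3$ skew, so $b_3$ is even); the theory of K\"ahler groups being vacuous here, hard Lefschetz is then the only remaining obstruction, and defeating it is enough.

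For the construction I would follow, in dimension $6$, the strategy behind the simply connected symplectic $8$-manifold of \cite{FM,BaMu}, arranged to be complex from the outset. Start with a $6$-dimensional nilmanifold $N=\G\backslash G$ chosen from Salamon's classification \cite{Salamon} so that it carries a left-invariant integrable complex structure $J$ and, separately, a left-invariant symplectic form $\omega_0$ (these cannot form a K\"ahler pair, as a non-toral nilmanifold admits no K\"ahler metric). Since $N$ is not simply connected one passes to a complex and symplectic \emph{orbifold} built from $N$ --- say a quotient by a finite cyclic group acting holomorphically and symplectically with isolated fixed points, possibly after a preliminary fiber-bundle modification of $N$ --- and then takes a resolution of singularities $M$: holomorphically on the complex side (the quotient singularities being resolvable), and symplectically on the symplectic side via a symplectic resolution of the orbifold points in the spirit of McDuff's symplectic blow-up. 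The exceptional fibers over the isolated singular points being simply connected, $\pi_1(M)$ equals the orbifold fundamental group modulo the normal closure of the local isotropy groups, and the reason for using a \emph{nilpotent} $\G$ together with a carefully chosen action is exactly that this quotient collapses to $1$, so $M$ is simply connected.

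Next I would compute $H^*(M;\RR)$ as a ring: the cohomology of $N$ comes from the Chevalley--Eilenberg (Nomizu) complex of the Lie algebra, its invariant part is the cohomology of the orbifold, and a Mayer--Vietoris / blow-up formula assembles $H^*(M)$ from this together with the cohomology of the exceptional divisors and the associated Gysin maps. The design goal is a \emph{degenerate} product $H^2(M)\otimes H^2(M)\to H^4(M)$, namely that $\dim\bigl(a\cup H^2(M)\bigr)<b_2(M)=b_4(M)$ for every $a\in H^2(M)$ with $a^3\ne 0$, so that $L_a$ is never an isomorphism. The mechanism is illustrated by the toy ring with $b_2=3$, generators $u,v,w$, relations $u^2=v^2=w^2=vw=0$, and $uv,uw$ linearly independent with $uvw\ne 0$: here $a=xu+yv+zw$ has $a^3=4xyz\,uvw$, so a symplectic class needs $xyz\ne 0$, while $L_a(v)=x\,uv$, $L_a(w)=x\,uw$ and $L_a(u)=y\,uv+z\,uw$ force $\Im L_a=\la uv,uw\ra$, which is only $2$-dimensional; the body of the proof replaces this toy model by the actual ring of $M$ and draws the same conclusion.

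Finally, every compact K\"ahler manifold satisfies hard Lefschetz, so a K\"ahler form on $M$ would make $L_{[\omega]}\colon H^2(M)\to H^4(M)$ an isomorphism; since this fails for every symplectic class, $M$ has no K\"ahler metric, which is Theorem~\ref{main}. I would close by noting that $M$ is nonetheless formal with even odd Betti numbers, so the obstruction is detected \emph{only} through Lefschetz --- which is what singles out dimension $6$. The main obstacle is the simultaneous realization: coercing one Lie algebra, one lattice, one finite action and one resolution into yielding at once a complex, symplectic \emph{and} simply connected manifold whose cohomology ring is degenerate enough to kill Lefschetz for all symplectic forms, since complex and symplectic positivity push against the very degeneracy one needs; verifying simple connectivity of the resolution is the second delicate point.
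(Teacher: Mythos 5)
Your blueprint coincides with the paper's: a $6$-dimensional nilmanifold carrying both a complex structure and a symplectic form, a finite quotient preserving both, a resolution of the resulting orbifold, simple connectivity from the choice of action, and failure of hard Lefschetz for \emph{every} symplectic class (formality and evenness of $b_3$ being automatic for simply connected $6$-manifolds). But there is a genuine gap at the sentence where you ``take a resolution of singularities $M$: holomorphically on the complex side \dots\ and symplectically on the symplectic side.'' These are two different constructions which a priori produce two different smooth manifolds, $\widetilde M_c$ (Hironaka) and $\widetilde M_s$ (the symplectic resolution of \cite{CFM}); the theorem requires a \emph{single} smooth manifold carrying both structures. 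The paper's Theorem \ref{thm:resolutions} is devoted precisely to this point: it builds a diffeomorphism $\widetilde M_c\cong\widetilde M_s$, supported near the exceptional loci, under the hypothesis that the two local isotropy representations $\imath,\imath'\colon\Gamma_p\to\U(n)$ (one coming from a $\Gamma_p$-invariant K\"ahler metric in the complex chart, the other from an equivariant Darboux chart followed by a compatible complex structure) can be arranged to coincide; verifying $\imath=\imath'$ for the chosen example requires an explicit linear change of variables such as $v_2=\frac1{\sqrt2}(u_2-i\bar u_3)$, $v_3=\frac1{\sqrt2}(\bar u_2-iu_3)$. Without this identification you only obtain a complex manifold and a symplectic manifold that agree away from the exceptional sets, which does not prove the theorem.

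A second mismatch: you posit an action with \emph{isolated} fixed points, but in the example the paper constructs ($\ZZ_6$ acting on the Eisenstein quotient of the complex Heisenberg nilmanifold) the singular locus of $\widehat M$ has positive-dimensional components --- five $2$-tori and a $2$-sphere in addition to twelve isolated points --- and handling these strata (equivariant resolution along them, and checking that $\pi_1(E')\to\pi_1(S')$ is an isomorphism in the Seifert--Van Kampen step) is a substantial part of the argument; the paper explicitly flags this as the new difficulty relative to \cite{CFM}. Your Lefschetz mechanism (forcing $\dim L_a(H^2)<b_2=b_4$ for every $a$) is a legitimate variant of the paper's, which instead exhibits a single class $[\beta]=\Psi([\nu\wedge\bar\nu])$ with $[\beta]\cup[\alpha_1]\cup[\alpha_2]=0$ for all $[\alpha_1],[\alpha_2]\in H^2(\widetilde M)$, hence lying in $\ker L_{[\Omega]}$ for every $\Omega$; either way one must first control $H^2(\widetilde M)$, which the paper does by a Mayer--Vietoris comparison of $H^2(\widehat M)=H^2(M)^{\ZZ_6}$ with the compactly supported cohomology of a neighbourhood of the exceptional locus, consistent with your outline.
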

In order to construct such an example, we start with a $6$-dimensional nilmanifold $M$ admitting both a complex structure $J$ and a symplectic structure $\omega$. Then we quotient it by a f{}inite 
group preserving $J$ and $\omega$ to obtain a simply connected, 
$6$-dimensional orbifold $\widehat M$ with an orbifold complex structure $\widehat J$ and
an orbifold symplectic form $\widehat\omega$. By Hironaka Theorem \cite{Hironaka},
there is a complex resolution $(\widetilde M_{c}, \widetilde J)$ of $(\widehat M, \widehat J)$. As in \cite{CFM}, we resolve symplectically the singularities of $(\widehat M, \widehat \omega)$ to 
obtain a smooth symplectic $6$-manifold $(\widetilde M_{s}, \widetilde \omega)$. 
However, in our situation, the singular locus of the orbifold $\widehat{M}$ does not consist
only of a discrete set of points, in contrast with \cite{CFM}. For a complex and symplectic orbifold, we provide conditions under which the 
complex and the symplectic resolution of singularities are dif{}feomorphic (Theorem \ref{thm:resolutions}). Using this we prove that the resolutions $\widetilde M_{c}$ and $\widetilde M_{s}$ are 
dif{}feomorphic. Thus, $\widetilde M=\widetilde M_{c}$ is not only a complex manifold but also a symplectic one.

Next, we show that $\widetilde M$ is simply connected (Proposition 6.1), this resulting from the careful choice of the action of the f{}inite group on $M$. Since $\widetilde M$ is a $6$-dimensional 
simply connected compact manifold, then $b_1(\widetilde M)=0$, and $b_3(\widetilde M)$ is even by Poincar\'e duality. Also $\widetilde M$ is automatically formal by \cite[Theorem 3.2]{FM2}. Therefore, 
to ensure that $\widetilde M$ does not carry any K\"ahler metric, we use the Lefschetz property; more precisely, we prove that the map $L_{[\Omega]}\colon H^2(\widetilde M)\to H^4(\widetilde 
M)$ given by the cup product with $[\Omega]$ is not an isomorphism for any possible symplectic form $\Omega$. Again the choice of nilmanifold $M$ and f{}inite group action makes possible to have a 
non-zero $[\beta]\in H^2(\widetilde M)$ such that $[\beta]\wedge [\alpha_1]\wedge [\alpha_2]=0$ for every $[\alpha_1], [\alpha_2]\in H^2(\widetilde M)$, which gives the result.

\noindent {\bf Acknowledgments.} We would like to thank Claude LeBrun and Dietmar Salamon for the conversations that they had with the second author during her stay at the
Simons Center for Geometry and Physics, Stony Brook University, in September 2014. We are grateful to Joel Fine, Robert Gompf, Daniel Guan, Emmy Murphy, Francisco Presas, S\"onke Rollenske and 
Richard Thomas for useful comments. Second author partially supported through Project MICINN (Spain) MTM2011-28326-C02-02, and Project of UPV/EHU ref.\ UFI11/52.

\section{Orbifolds} \label{sec:orbifolds}

\begin{definition}
A (smooth) $n$-dimensional orbifold is a Hausdorf{}f, paracompact topological 
 space $X$ endowed with an atlas $\cA=
\{(U_p, {\widetilde U}_p, \Gamma_p, \varphi_p)\}$ of orbifold charts,
that is $U_p\subset X$ is a neighbourhood of $p\in X$, $\widetilde U_p\subset \RR^n$ an
open set, $\Gamma_p\subset \GL(n, \RR)$ a f{}inite group acting on $\widetilde U_p$,
and $\varphi_p\colon{\widetilde U_p}\to U_p$ is a $\Gamma_p$-invariant map
with $\varphi_p(0)=p$, inducing a homeomorphism ${\widetilde U_p}/\Gamma_p \cong U_p$.

The charts are compatible in the following sense:
if $q \in U_q\cap U_p$, then there exist a connected 
neighbourhood $V\subset U_q \cap U_p$ and a 
dif{}feomorphism $f\colon\varphi_p^{-1}(V)_0 \to \varphi_q^{-1}(V)$,
where $\varphi_p^{-1}(V)_0$ is the connected component of $\varphi_p^{-1}(V)$ 
containing $q$, such that $f(\sigma(x))=\rho(\sigma)(f(x))$, for
any $x$, and $\sigma\in \Stab_{\Gamma_p}(q)$, where 
$\rho\colon \Stab_{\Gamma_p}(q) \to \Gamma_q$ is a group isomorphism.
\end{definition}

For each $p\in X$, let $n_p=\# \Gamma_p$ be the order of the orbifold point (if $n_p=1$ the point is smooth, also called non-orbifold point). The singular locus of the orbifold is the set $S=\{p\in X 
\ | \ n_p>1\}$. Therefore $M-S$ is a smooth $n$-dimensional manifold.
The singular locus $S$ is stratif{}ied: if we write $S_k=\{p \ | \ n_p=k\}$, and consider its closure $\overline{S_k}$, then $\overline{S_k}$ inherits the structure of an orbifold. In particular $S_k$ 
is a smooth manifold, and the closure consists of some points of $S_{kl}$, $l\geq 2$. 

We say that the orbifold is locally oriented if $\Gamma_p\subset \GL_+(n,\RR)$ for any $p\in X$. As $\Gamma_p$ is f{}inite, we can choose a metric on $\widetilde U_p$ such that $\Gamma_p\subset 
\SO(n)$. An element $\sigma\in \Gamma_p$ admits a basis in which it is written as
$\sigma=\diag \left(
\left( \begin{array}{cc} \cos \theta_1 & -\sin \theta_1\\ \sin \theta_1 & \cos \theta_1\end{array} \right),
\ldots, \left( \begin{array}{cc}  \cos \theta_r & -\sin \theta_r\\ \sin \theta_r & \cos \theta_r\end{array}\right), 1,\ldots, 1\right)$,
for $\theta_1,\ldots,\theta_r\in (0,2\pi)$. In particular, the set of points f{}ixed by $\sigma$ is of codimension $2r$. Therefore the set of singular points $S\cap U_p$ is of codimension $\geq 2$, 
and hence $X-S$ is connected (if $X$ is connected). Also we say that the orbifold $X$ is oriented if it is locally oriented and $X-S$ is oriented.

A natural example of orbifold appears when we take a smooth manifold $M$ and a f{}inite group $\Gamma$ acting on $M$ ef{}fectively. Then $\widehat{M}=M/\Gamma$ is an orbifold. If $M$ is oriented and 
the action of $\Gamma$ preserves the orientation, then $\widehat{M}$ is an oriented orbifold. Note that for every $\widehat p\in \widehat{M}$, the group $\Gamma_{\widehat p}$ is the stabilizer of 
$p\in M$, with $\widehat p=\widehat\pi(p)$ under the natural projection $\widehat \pi\colon M\to \widehat{M}$.

\begin{definition}\label{def:complex_orbifold}
A complex orbifold is a $2n$-dimensional orbifold $X$ whose orbifold charts have $\widetilde U_p\subset \CC^n$,
$\Gamma_p\subset \GL(n, \CC)$, and in the compatibility of charts the maps $f$ are biholomorphisms.
Note that $X$ is automatically oriented.
\end{definition}

If $M$ is a complex manifold and $\Gamma$ is a f{}inite group acting ef{}fectively
on $M$ by biholomorphisms, then $\widehat{M}=M/\Gamma$ is a complex orbifold. 

The complex structure of a complex orbifold $X$ can be given by the orbifold $(1,1)$-tensor $J$ 
with $J^2=-\id$. This is given by tensors $J_p$ on each $\widetilde U_p$ def{}ining the complex structure,
which are $\Gamma_p$-equivariant, for each $p\in X$, and which agree under the functions
$f$ def{}ining the compatibility of charts.

\begin{definition} \label{def:resolution_complex}
A {\it complex resolution} of a complex orbifold $(X,J)$ is a complex manifold
$\widetilde{X}$ together with a holomorphic map $\pi\colon\widetilde{X}\to X$ which is
a biholomorphism $\widetilde{X} - E \to X-S$,
where $S\subset X$ is the singular locus and $E=\pi^{-1}(S)$ is the {\it exceptional locus}.
\end{definition}

Let $X$ be an orbifold. An orbifold $k$-form $\alpha$ consists of a collection of $k$-forms $\alpha_p$ on 
each $\widetilde U_p$ which are $\Gamma_p$-equivariant and that match under
the compatibility maps between dif{}ferent charts. 

\begin{definition}\label{def:symplectic_orbifold}
A symplectic orbifold $(X, \omega)$ consists of a $2n$-dimensional oriented orbifold $X$ and an orbifold 
$2$-form $\omega$ such that $d\omega=0$ and $\omega^n>0$ everywhere.
\end{definition}

If $M$ is a symplectic manifold and $\Gamma$ is a f{}inite group acting ef{}fectively
on $M$ by symplectomorphisms, then $\widehat{M}=M/\Gamma$ is a symplectic orbifold. 

\begin{definition} \label{def:resolution_symplectic}
A {\it symplectic resolution} of a symplectic orbifold
$(X,\omega)$ consists of a smooth symplectic manifold
$(\widetilde{X},\widetilde{\omega})$ and a map $\pi\colon\widetilde{X}\to X$ such
that:
  \begin{itemize}
  \item $\pi$ is a dif{}feomorphism $\widetilde{X}- E \to X-S$,
  where $S\subset X$ is the singular locus and $E=\pi^{-1}(S)$ is
  the {\it exceptional locus}.
  \item $\widetilde\omega$ and $\pi^*\omega$ agree in the complement of a
  small neighbourhood of $E$.
  \end{itemize}
\end{definition}

\section{Desingularization of orbifold points}\label{sec:points}

In this section we suppose that $X$ is an oriented orbifold whose singular locus $S$ consists of a discrete set of points. Assume that $X$ admits a complex structure $J$ and a symplectic structure 
$\omega$. Therefore we have a complex orbifold $(X,J)$ and a symplectic orbifold $(X,\omega)$. 

It is well-known that $(X,J)$ admits a complex resolution $(\widetilde X_c,\widetilde J)$ by 
Hironaka's desingularization \cite{Hironaka}. Also, the symplectic orbifold
$(X,\omega)$ admits a symplectic resolution $(\widetilde X_s,\widetilde \omega)$ 
by Theorem 3.3 in \cite{CFM}. We want to compare the two resolutions.

First, let us look at the complex resolution of $(X,J)$.
Consider $p\in S$, and let $U_p=\widetilde U_p/\Gamma_p$ be an orbifold neighbourhood. 
Recall that we denote $\varphi_p\colon\widetilde U_p\to U_p$ the quotient map. 
By def{}inition of complex orbifold, $\widetilde U_p \subset \CC^n=\RR^{2n}$ and $\Gamma_p
\subset \GL(n,\CC)$. As $\Gamma_p$ is a f{}inite group, we can choose a K\"ahler metric 
invariant by $\Gamma_p$. With a linear change of variables, we can transform the K\"ahler metric into standard form. That is, 
we can suppose that 
there is an inclusion
\begin{equation}\label{inc:complex}
\imath\colon\Gamma_p\inc \U(n).
\end{equation}

Shrinking  
$\widetilde U_p$ if necessary, we can assume that $\widetilde U_p=B_\epsilon(0)$, for some $\epsilon>0$.

Consider now an algebraic resolution of the singularity of $Y=\CC^n/\Gamma_p$,
provided by \cite{Hironaka}. Denote it $\pi\colon\widetilde Y \to Y$, and let $E=\pi^{-1}(p)$
be the exceptional locus. Write $B=B_\epsilon(0)/\Gamma_p$ and 
$\widetilde B=\pi^{-1}(B)$. The complex resolution is def{}ined as the smooth manifold
\[
 \widetilde X_c = (X-\{p\} ) \cup_\pi \widetilde B,
\]
where we identify with
the map $\pi\colon\tilde B-E \to  B-\{p\}= U_p -\{p\}$.
This has a natural complex structure since $\pi$ is a biholomorphism.

Now we move to the construction of the symplectic resolution of $(X,\omega)$,
as done in \cite{CFM}. For $p \in S$, take an orbifold neighbourhood
$U_p'=\widetilde U_p'/\Gamma_p'$, with $\varphi_p'\colon\widetilde U_p'\to U_p'$. 
By the equivariant Darboux theorem, there
is a $\Gamma_p'$-equivariant symplectomorphism $(\widetilde U_p',\omega_p) \cong (V,\omega_0)$, where $V \subset \RR^{2n}$ is an open set, and $\omega_0$ is the standard symplectic form (shrinking $\widetilde U_p'$ if necessary). 
So without loss of generality, we can assume that 
$\widetilde U_p' \subset (\RR^{2n},\omega_0)$, where
$\omega_0$ is the standard symplectic form, and $\Gamma_p' \subset \Sp(2n,\RR)$. 
As $\Gamma_p'$ is a f{}inite group, and $\U(n)\subset \Sp(2n,\RR)$ is the maximal compact subgroup, we can choose a complex structure $J$ on $\RR^{2n}$ such
that the pair $(J, \omega_0)$ determines a K\"ahler metric, which is invariant by $\Gamma_p'$. 
We perform a linear change of variables, which transforms the complex structure 
into standard form (so $\widetilde U_p'$ has the standard K\"ahler structure). 
Equivalently, 
we can suppose that there is an inclusion
\begin{equation}\label{inc:symplectic}
\imath'\colon\Gamma_p'\inc \U(n).
\end{equation}

Shrinking $\widetilde U_p'$ if necessary, we can assume that $\widetilde U_p'=B_{\epsilon'}(0)$, for some $\epsilon'>0$.

Consider an algebraic resolution of singularities of $Y'=\CC^n/\Gamma_p'$,
say $\pi'\colon\widetilde Y' \to Y'$, and let $E'=(\pi')^{-1}(p)$
be the exceptional locus. Write $B'=B_{\epsilon'}(0)/\Gamma_p'$ and 
$\widetilde B'=(\pi')^{-1}(B')$. The symplectic resolution is def{}ined as the smooth manifold
\[
\widetilde X_s = (X-\{p\} ) \cup_{\pi'} \widetilde B',
\]
where $\widetilde B'-E'$ and $B'-\{p\}= U_p' -\{p\}$ are identif{}ied by $\pi'$.
This has a symplectic structure that is constructed by gluing the 
symplectic structure of $X-\{p\}$ and the K\"ahler form of $\widetilde B'$ by a 
cut-of{}f process, as done in Theorem 3.3 of \cite{CFM}.

Now we are going to compare $\widetilde X_c$ and $\widetilde X_s$. 
First note that for $p\in S$, we have $\Gamma_p\cong \Gamma_p'$. This follows
from $\Gamma_p\cong \pi_1(B-\{p\})$
and $\Gamma_p'\cong \pi_1(B'-\{p\})$, and the fact that $B, B'$ are homeomorphic. 
So we shall denote $\Gamma_p'=\Gamma_p$ henceforth.
We have the following result.

\begin{theorem} \label{thm:resolutions}
If one can arrange that the inclusions $\imath$ and $\imath'$, given by \eqref{inc:complex} and \eqref{inc:symplectic}, respectively, 
are such that
$\imath=\imath'$ for every singular point $p\in S$, then there is a dif{}feomorphism 
$\widetilde X_c  \cong \widetilde X_s$, which is the identity outside a small neighbourhood
of the exceptional loci. In particular, $\widetilde X_c$ admits both complex and
symplectic structures.
\end{theorem}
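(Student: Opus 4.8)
The plan is to desingularize locally at each singular point and glue. Near a point $p\in S$, both resolutions are built by removing an orbifold ball $B$ (resp.\ $B'$) and gluing in a resolution $\widetilde B$ of $\CC^n/\Gamma_p$ (resp.\ $\widetilde B'$ of $\CC^n/\Gamma_p$, using $\Gamma_p'=\Gamma_p$). Under the hypothesis $\imath=\imath'$, the inclusions of $\Gamma_p$ into $\U(n)$ coincide, so the two quotients $\CC^n/\Gamma_p$ are \emph{the same} complex space, and Hironaka's algebraic resolution of that space is unique up to the choice made --- but in any case we may simply \emph{use the same algebraic resolution} $\pi\colon\widetilde Y\to Y$ in both constructions. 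Thus $\widetilde B=\widetilde B'$ as complex manifolds, and the gluing regions $B-\{p\}=U_p-\{p\}$ agree as subsets of $X$. This already produces a diffeomorphism $\widetilde X_c\cong\widetilde X_s$ which is the identity on $X-S$: outside the exceptional loci nothing has been changed, and over each ball we have literally glued in the same piece.

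The step that requires care --- and which I expect to be the main obstacle --- is reconciling the two different identifications of neighbourhoods of $p$ in $X$. In the complex construction one uses a $\Gamma_p$-invariant K\"ahler metric on $\widetilde U_p$ put into standard form by a \emph{complex-linear} change of variables; in the symplectic construction one uses the equivariant Darboux theorem to put $(\widetilde U_p',\omega_p)$ into standard symplectic form, then chooses a compatible $\Gamma_p$-invariant complex structure and straightens it by a \emph{symplectic-linear} change of variables. A priori these two trivializations of a punctured neighbourhood of $p$ differ by a diffeomorphism of $B-\{p\}$; one must check that this diffeomorphism extends over the glued-in piece $\widetilde B$. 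The point is that after the reductions both charts realize $\Gamma_p$ via the \emph{same} embedding $\imath=\imath'$ into $\U(n)\subset\GL(n,\CC)\cap\Sp(2n,\RR)$, so the transition map between the two standard models is an element of the normalizer of $\imath(\Gamma_p)$ in $\U(n)$ (or at worst a $\Gamma_p$-equivariant biholomorphism germ of $(\CC^n,0)$), hence is itself complex-linear (up to higher order, which can be absorbed) and therefore lifts canonically to the algebraic resolution $\widetilde Y$, since blow-up type resolutions are functorial for (equivariant) biholomorphisms fixing the origin.

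Concretely, I would organize the argument as follows. \textbf{Step 1.} Fix $p\in S$. Using $\imath=\imath'$, identify $\Gamma_p\subset\U(n)$ and set $Y=\CC^n/\Gamma_p$ with a single chosen algebraic resolution $\pi\colon\widetilde Y\to Y$, $E=\pi^{-1}(0)$. \textbf{Step 2.} Show that the complex chart $\widetilde U_p=B_\epsilon(0)$ and the symplectic chart $\widetilde U_p'=B_{\epsilon'}(0)$ (after shrinking to a common radius) are related by a $\Gamma_p$-equivariant diffeomorphism of a punctured ball that is complex-linear, hence is (the germ of) an automorphism of $Y$ fixing $0$. \textbf{Step 3.} Lift this automorphism through $\pi$ to a biholomorphism of a neighbourhood of $E$ in $\widetilde Y$ --- this uses the compatibility of the resolution with the group action, exactly as in Hironaka's construction, or one may invoke uniqueness/functoriality of the canonical resolution. \textbf{Step 4.} Conclude that $\widetilde B$ and $\widetilde B'$ are identified compatibly with the gluing maps $\pi,\pi'$, so the identity map on $X-S$ extends to a diffeomorphism $\widetilde X_c\to\widetilde X_s$. \textbf{Step 5.} Transport structures: the complex structure on $\widetilde X_c$ is manifest; the symplectic form $\widetilde\omega$ on $\widetilde X_s$ pulls back to a symplectic form on $\widetilde X_c$, and since the diffeomorphism is the identity outside a neighbourhood of the exceptional loci, $\widetilde X_c$ carries both structures. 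Iterating over the finitely many points of $S$ (or handling them simultaneously, as they are disjoint) gives the theorem.
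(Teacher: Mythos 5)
You correctly isolate the easy part (under $\imath=\imath'$ one takes the same algebraic resolution $\widetilde Y\to Y=\CC^n/\Gamma_p$ in both constructions) and you correctly identify the main obstacle: the complex chart $\varphi_p$ and the Darboux chart $\varphi_p'$ give two different identifications of a neighbourhood of $p$ with a ball in $\CC^n$, and the transition $f=(\varphi_p')^{-1}\circ\varphi_p$ must somehow be reconciled with the glued-in piece. But the mechanism you propose for overcoming this obstacle does not work. Your Step 2 asserts that $f$ is complex-linear, or at worst a $\Gamma_p$-equivariant biholomorphism germ ``up to higher order which can be absorbed.'' There is no reason for this: the complex chart is adapted to $J$ and the symplectic chart to $\omega$, and these two structures on $X$ are not assumed compatible, so $f$ is merely a $\Gamma_p$-equivariant \emph{smooth} diffeomorphism germ. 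Even its linear part $L=df(0)$ is only constrained to lie in the commutant of $\imath(\Gamma_p)$ inside $\GL(2n,\RR)$, which is in general far larger than $\GL(n,\CC)$ --- e.g.\ for $\Gamma_p=\{\pm\id\}$ (which actually occurs for the tori $S_{(p,q)}$ in this paper) the commutant is all of $\GL(2n,\RR)$. Consequently Step 3 collapses: a non-holomorphic diffeomorphism of $\CC^n/\Gamma_p$ fixing the origin does not lift to the algebraic resolution; already for the blow-up of $\CC^2$ at $0$, an $\RR$-linear map that is neither $\CC$-linear nor anti-linear does not extend continuously over the exceptional $\CP^1$. So ``functoriality of the resolution'' cannot be invoked here, and the claim that ``we have literally glued in the same piece'' begs the question.

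The paper's proof takes a different and genuinely necessary route at exactly this point: it never lifts $f$ to the resolution. Instead it constructs a $\Gamma_p$-equivariant diffeomorphism $F$ of the ball onto its image with $F=f$ near the boundary and $F=\id$ near the origin, by a three-stage interpolation supported in annuli: (i) cut off the nonlinear remainder $R=f-L$ radially, using $\Gamma_p\subset\SO(2n)$ to keep equivariance and an estimate $|dg-L|\leq C|x|$ to keep injectivity; (ii) deform $L$ to an isometry by interpolating its singular values to $1$ (the exponent $\alpha$ is tuned so the Jacobian stays nonzero); (iii) contract the remaining element of $\SO(2n)$ to the identity along a path. Since $F=\id$ near $0$, the two gluings of $\widetilde B=\widetilde B'$ agree on a neighbourhood of the exceptional locus, and $F$ matches the identity on $X$ minus a slightly larger ball. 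If you want to repair your write-up, replace Steps 2--3 by such an equivariant isotopy-to-the-identity argument; the rest of your outline (Steps 1, 4, 5) then goes through as in the paper.
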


\begin{proof}
The key point is obviously that if $\imath=\imath'$, then $Y'=Y$, so
we can take $\widetilde Y'=\widetilde Y$ and $\pi'=\pi$ in the constructions above.

We f{}ix a point $p\in S$, and construct the required isomorphism in a neighbourhood
of the exceptional locus over that point. Consider the map (reducing $\epsilon>0$ if necessary)
\[
 f=(\varphi_p')^{-1}\circ \varphi_p \colon B_\epsilon(0)=\widetilde U_p \to B_{\epsilon'}(0)=\widetilde U_p';
\]
$f$ is $\Gamma_p$-equivariant and an open embedding (it might fail to be surjective) with $f(0)=0$.
We shall construct a map $F\colon B_\epsilon(0)\to B_{\epsilon'}(0)$ such that
 \begin{itemize}
 \item $F=\id$ in a small ball $B_{0.2\epsilon}(0)$, 
 \item $F=f$ outside a slightly bigger ball $B_{0.9\epsilon}(0)$, 
 \item $F$ is a $\Gamma_p$-equivariant dif{}feomorphism onto its image.
\end{itemize}
This gives a dif{}feomorphism $ F\colon \widetilde X_c \to \widetilde X_s$, def{}ined by $F$ on $B_\epsilon(0)/\Gamma_p-\{p\}$,
extended by the identity on $\pi^{-1}(B_{0.2\epsilon}(0)/\Gamma_p)$, and also by the identity 
on $X-\pi^{-1}(B_{0.9\epsilon}(0)/\Gamma_p)$.

Write $f(x)=L(x)+ R(x)$, where $L$ is the linear part and $|R(x)|\leq C|x|^2$, for some constant
$C>0$. Both these maps are $\Gamma_p$-equivariant.
Take a smooth, non-decreasing function $\rho_1\colon[0,\epsilon]\to [0,1]$ such that $\rho_1(t)=0$
for $t\in [0,0.8\epsilon]$ and $\rho_1(t)=1$ for $t\in [0.9\epsilon,1]$. Consider
$g(x)=L(x)+\rho_1(|x|) R(x)$. Then, $g(x)=L(x)$ for $|x|\leq 0.8\epsilon$,
$g(x)=f(x)$ for $|x|\geq 0.9\epsilon$, and $g(x)$ is $\Gamma_p$-equivariant because $\Gamma_p\subset \SO(2n)$.
Also
\[
 dg(x)-L =\rho_1'(|x|) R(x) d|x| + \rho_1(|x|) dR(x).
\]
Using that $|\rho_1'(t)|\leq C/\epsilon$ and $|dR(x)|\leq C|x|$ (we denote by $C>0$
uniform constants, that can vary from line to line)
we have that $|dg(x)-L|\leq C|x|$.
For $\epsilon>0$ small enough, we have that $g$ is a dif{}feomorphism onto its image.

For the next step, take the linear map $L\colon\RR^{2n}\to \RR^{2n}$. 
We can choose orthonormal (oriented) basis in both origin and target so that
$L=\diag(\lambda_1,\ldots, \lambda_{2n})$, where $\lambda_i>0$ are real numbers (the f{}irst vector of the basis
is a unitary vector $e_1$ such that $|L(e_1)|$ is maximized; then $L$ maps 
$\la e_1\ra^\perp$ to $\la L(e_1) \ra^\perp$, and we proceed inductively). Consider the map 
\[
 h(x)= \left\{ \begin{array}{ll} x, & |x| \leq 0.4 \epsilon, \\
 x + \rho_2\left(\left(\frac{|x|-0.4\epsilon}{0.3\epsilon}\right)^\alpha \right) 
  (L(x)-x), \qquad & 0.4 \epsilon \leq |x| \leq 0.7\epsilon, \\  
  g(x), & |x|\geq 0.7\epsilon, \end{array}\right.
\]
where $\rho_2\colon[0,1] \to [0,1]$ is smooth non-decreasing with $\rho_2(t)=0$ for $t\in [0,\frac13]$, and $\rho_2(t)=1$ for $t\in [\frac23,1]$. Here $\alpha>0$ is a constant to be f{}ixed soon. 

Clearly $h$ is $\Gamma_p$-equivariant, $h(x)=f(x)$ of{}f $B_{0.9\epsilon}(0)$,
and $h(x)=x$ in $B_{0.4\epsilon}(0)$ (but beware,
we have chosen dif{}ferent coordinates on the origin $\RR^{2n}$ and the target $\RR^{2n}$, so $h$ is not the identity in the ball).
The map $h$ is $\cC^\infty$
because for $0.4 \epsilon \leq |x| \leq 0.5\epsilon$ we have also $h(x)=x$. Let us see that $h$ is a dif{}feomorphism onto its image. 
It only remains to see this for $ 0.5 \epsilon \leq |x|\leq 0.7 \epsilon$.
Write $y=h(x)$, so in our coordinates 
$y_i=x_i+ \rho_2(u) (\lambda_i-1)x_i$, with $u=\left(\frac{|x|-0.4\epsilon}{0.3\epsilon}\right)^\alpha$. 
Then,
 \begin{align*}
dy_i =& \left(1+ (\lambda_i-1) \rho_2(u)\right) dx_i + 
(\lambda_i-1) \rho_2'(u) \frac{\alpha}{0.3\epsilon} \left(\frac{|x|-0.4\epsilon}{0.3\epsilon}\right)^{\alpha-1}  x_i \gamma
 \end{align*}
with 
$\gamma=d|x|=\frac{1}{|x|} \sum x_jdx_j$. Write $\delta_i =\left(1+ (\lambda_i-1)\rho_2(u)\right)$, so $\delta_i$ takes values between $1$ and $\lambda_i$. We compute
 \begin{align*}
 dy_1 \wedge \ldots \wedge  dy_n & = \delta_1\ldots \delta_n  \, dx_1 \wedge \ldots \wedge  dx_n + \\ &+
 \sum \delta_1\ldots \hat{\delta_i}\ldots \delta_n
 \frac{(\lambda_i-1) \rho_2'(u)\alpha x_i}{0.3\epsilon}\!\! \left(\frac{|x|-0.4\epsilon}{0.3\epsilon}\right)^{\alpha-1} 
  dx_1 \wedge \ldots \wedge \stackrel{(i)}{\gamma} \wedge \ldots \wedge dx_n \\
 &=  \delta_1\ldots \delta_n \left( 1+  \alpha \sum  \frac{ (\lambda_i-1) \rho_2'(u)
(|x|-0.4\epsilon)^{\alpha-1}  x_i^2 }{|x|\delta_i (0.3\epsilon)^\alpha} \right)dx_1 \wedge \ldots \wedge dx_n.
 \end{align*}
In the sum, the numerator is bounded above by $C(0.3\epsilon)^{\alpha+1}$ and
the denominator is bounded below by $C^{-1}(0.3\epsilon)^{\alpha+1}$,
for some uniform (independent of $\alpha$) constant $C>0$.
Hence choosing $\alpha>0$ small enough, we get that the above quantity does not vanish, 
hence $h$ is a dif{}feomorphism onto its image.

After this step is done, recall that we have taken coordinates given by an orthonormal basis
$\{e_i\}$ on the origin $\RR^{2n}$, and by the orthonormal basis
$\{L(e_i)/\lambda_i\}$ on the target $\RR^{2n}$. Written with respect to the same
coordinates, we have an orthogonal transformation $M\colon \RR^{2n}\to \RR^{2n}$
so that $h(x)=M$ on $B_{0.4\epsilon}(0)$. 
The f{}inal step is to change the isometry $M\in \SO(2n)$ by the identity. Take a smooth path $M_t$ of matrices joining $M_0=\id$ with $M_1=M$. Take a 
smooth non-decreasing $\rho_3\colon [0,\epsilon] \to [0,1]$ with $\rho_3(t)=0$ for $t\in [0,0.2\epsilon]$, and $\rho_3(t)=1$ for $t\in [0.3\epsilon,\epsilon]$. The map
$F(x)=M_{\rho_3(|x|)}(x)$, $|x|\leq 0.4\epsilon$, and $F(x)=h(x)$ for $|x|\geq 0.4\epsilon$,
is the required map.
\end{proof}

\begin{remark} \label{rem:kahler}
Let $F\colon (\widetilde{X}_c,\widetilde J) \to (\widetilde{X}_s,\widetilde \omega)$ be the 
dif{}feomorphism provided by Theorem \ref{thm:resolutions}. Then if we denote $\widetilde \omega'=F^*\widetilde \omega$,
we have that $\widetilde X_c$ admits a symplectic structure $\widetilde \omega'$ and 
a complex structure $\widetilde J$. These are not compatible in general, but
they are compatible on a neighbourhood of the exceptional locus, and give a K\"ahler structure there.
\end{remark}

\section{A complex and symplectic $6$-orbifold} \label{sec:complex-symplectic}

Consider the complex Heisenberg group $G$, that is, the
complex nilpotent Lie group of (complex) dimension 3 consisting of matrices of the form
\[
 \begin{pmatrix} 1&u_2&u_3\\ 0&1&u_1\\ 0&0&1\end{pmatrix}.
\]
In terms of the natural
(complex) coordinate functions $(u_1,u_2,u_3)$ on $G$, we have
that the complex $1$-forms $\mu=du_1$, $\nu=du_2$ and
$\theta=du_3-u_2 \, du_1$ are left invariant, and
\[
 d\mu=d\nu=0, \quad d\theta=\mu\wedge\nu.
\]
Let $\Lambda \subset \CC$ be the lattice generated by $1$
and $\zeta=e^{2\pi i/6}$,
and consider the discrete subgroup
$\Gamma\subset G$ formed by the matrices in which $u_1,u_2,u_3 \in \Lambda$. We def{}ine the compact (parallelizable)
nilmanifold
\[
 M=\Gamma \backslash G.
\]
We can describe $M$ as a principal torus bundle
\[
 T^2=\CC/\Lambda \inc M \to T^4=(\CC/\Lambda)^2
\]
by the projection $(u_1,u_2,u_3) \mapsto (u_1,u_2)$.

Consider the action of the f{}inite group $\ZZ_6$ on $G$ given by the generator
\begin{eqnarray*}
 \rho\colon G & \to & G\\
 (u_1, u_2,u_3) &\mapsto & (\zeta^4\, u_1, \zeta\, u_2,\zeta^5\, u_3) .
\end{eqnarray*}
This action satisf{}ies that $\rho(p\cdot q)=\rho(p)\cdot \rho(q)$,
for $p, q \in G$, where $\cdot$ denotes the natural group
structure of $G$. Moreover, $\rho(\Gamma)=\Gamma$. Thus, $\rho$
induces an action on the quotient $M=\Gamma\backslash G$. Denote by $\rho\colon M \to M$ the $\ZZ_6$-action. The action on 1-forms is given by
\[
 \rho^*\mu= \zeta^4\, \mu, \quad
 \rho^*\nu= \zeta\, \nu,\quad
 \rho^*\theta= \zeta^5\,\theta.
\]

\begin{proposition}
$\widehat M=M/\ZZ_{6}$ is a $6$-orbifold admitting complex and symplectic structures.
\end{proposition}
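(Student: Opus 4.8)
The plan is to verify the two assertions separately: first that $\widehat{M}=M/\ZZ_6$ is a genuine $6$-orbifold, then that it carries both a complex and a symplectic structure, each induced from the corresponding $\ZZ_6$-invariant structure on $M$.

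For the orbifold claim I would invoke the general principle recalled in Section~\ref{sec:orbifolds}: if a finite group acts effectively on a smooth manifold, the quotient is an orbifold, and if the manifold is oriented and the action is orientation-preserving the quotient is an oriented orbifold. So the first task is to check that the $\ZZ_6$-action generated by $\rho$ is effective on $M$. Since $\rho$ acts on the invariant coframe $(\mu,\nu,\theta)$ by multiplication by $(\zeta^4,\zeta,\zeta^5)$ and $\zeta=e^{2\pi i/6}$ is a primitive sixth root of unity (the exponents $4,1,5$ generate $\ZZ_6$), no power $\rho^k$ with $1\le k\le 5$ fixes the coframe, hence $\rho$ has order exactly $6$ and the action is effective. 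Orientation-preservation is automatic because $\rho$ acts holomorphically on the complex manifold $M$ (it is the descent of a holomorphic group automorphism of $G$), so $\widehat M$ is an oriented $6$-dimensional orbifold. I would also note, for later use in the paper though not strictly needed here, that the singular locus is nonempty and need not be discrete, coming from fixed points of the various nontrivial powers of $\rho$ on $M=\Gamma\backslash G$.

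For the complex structure: $M=\Gamma\backslash G$ is a complex manifold, being the quotient of the complex Lie group $G$ by the discrete subgroup $\Gamma$ acting by left translations (which are biholomorphisms). The map $\rho$ is induced by a holomorphic automorphism of $G$ (it is linear in the coordinates $u_i$, or equivalently acts on the holomorphic coframe by scalars), and it normalizes $\Gamma$, so it descends to a finite group of biholomorphisms of $M$. By the remark in Section~\ref{sec:orbifolds} that $M/\Gamma$ is a complex orbifold whenever a finite group acts effectively by biholomorphisms, $\widehat M$ inherits a complex orbifold structure $\widehat J$: concretely, the integrable almost complex structure $J$ on $M$ determined by declaring $\mu,\nu,\theta$ of type $(1,0)$ is $\rho$-invariant since $\rho^*$ preserves the span of the $(1,0)$-forms, hence descends.

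For the symplectic structure: the point is to exhibit a $\rho$-invariant (equivalently $\ZZ_6$-invariant) closed $2$-form on $M$ whose cube is a volume form, and then appeal to the fact (Section~\ref{sec:orbifolds}) that the quotient of a symplectic manifold by a finite group of symplectomorphisms is a symplectic orbifold. Using $d\mu=d\nu=0$ and $d\theta=\mu\wedge\nu$, a natural candidate built from invariant forms is a real combination such as $\omega = \tfrac{i}{2}\bigl(\mu\wedge\bar\nu - \nu\wedge\bar\mu\bigr) + \tfrac{i}{2}\,\theta\wedge\bar\theta$, or more simply $\omega=\tfrac{i}{2}(\mu\wedge\bar\mu+\nu\wedge\bar\nu+\theta\wedge\bar\theta)$ adjusted so that the $d\theta=\mu\wedge\nu$ term cancels; I would check closedness directly and nondegeneracy by computing $\omega^3$. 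The key observation making invariance work is that $\rho^*\mu\wedge\rho^*\bar\mu=|\zeta^4|^2\mu\wedge\bar\mu=\mu\wedge\bar\mu$ and similarly for $\nu,\theta$, so any form of "Hermitian type" $\sum a_{jk}\,\alpha_j\wedge\bar\alpha_k$ with the right Hermitian symmetry is automatically $\ZZ_6$-invariant; one just needs to arrange closedness, which constrains the coefficient of $\mu\wedge\bar\nu$ relative to that of $\theta\wedge\bar\theta$ via the structure equation. The main obstacle — really the only nonroutine point — is choosing the coefficients of this invariant form so that it is simultaneously closed \emph{and} nondegenerate; once such an $\omega$ is written down, invariance is immediate from the scalar action on the coframe, $\rho$ is a symplectomorphism, and $\widehat M$ is a symplectic orbifold. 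Everything else is the bookkeeping of applying the two general constructions from Section~\ref{sec:orbifolds} to the specific data $(M,J,\omega,\rho)$.
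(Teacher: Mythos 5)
Your treatment of the orbifold claim and of the complex structure is fine and essentially matches the paper's: the action of $\rho$ on the invariant coframe shows it is effective and holomorphic, so $\widehat M=M/\ZZ_6$ is an oriented complex orbifold by the general facts of Section~\ref{sec:orbifolds}. The gap is in the symplectic part. You restrict the search to forms of ``Hermitian type'' $\sum a_{jk}\,\alpha_j\wedge\bar\alpha_k$ in the coframe $(\mu,\nu,\theta)$, i.e.\ to $(1,1)$-forms, and assert that invariance is automatic, leaving only the tuning of coefficients for closedness and nondegeneracy. Both halves of this fail. First, only the diagonal terms are invariant: $\rho^*(\mu\wedge\bar\nu)=\zeta^{4}\bar\zeta\,\mu\wedge\bar\nu=\zeta^{3}\,\mu\wedge\bar\nu=-\mu\wedge\bar\nu$, and likewise every off-diagonal $(1,1)$-term picks up a nontrivial root of unity, so the only invariant Hermitian forms are $a_1\,\mu\wedge\bar\mu+a_2\,\nu\wedge\bar\nu+a_3\,\theta\wedge\bar\theta$. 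Second, and fatally, $d(\theta\wedge\bar\theta)=\mu\wedge\nu\wedge\bar\theta-\theta\wedge\bar\mu\wedge\bar\nu$, and no other $(1,1)$-term in the coframe has a differential containing $\mu\wedge\nu\wedge\bar\theta$, so closedness forces $a_3=0$; the resulting form does not involve $\theta$ at all and hence satisfies $\omega^3=0$. There is no closed nondegenerate invariant $(1,1)$-form to be found here --- unsurprisingly, since a closed positive one would make the nilmanifold $M$ K\"ahler. (Your two explicit candidates illustrate the problem: neither is closed, the first is not $\rho$-invariant, and no ``adjustment'' within $(1,1)$-forms can cancel the $d\theta$ contribution while keeping $\omega^3\neq 0$.)

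The fix, which is what the paper does, is to leave the $(1,1)$ world: the $(2,0)$-form $\nu\wedge\theta$ is closed because $d(\nu\wedge\theta)=-\nu\wedge\mu\wedge\nu=0$, and it is $\rho$-invariant because $\rho^*(\nu\wedge\theta)=\zeta^{1+5}\,\nu\wedge\theta=\nu\wedge\theta$. Hence $\omega=-i\,\mu\wedge\bar\mu+\nu\wedge\theta+\bar\nu\wedge\bar\theta$ is a real, closed, $\ZZ_6$-invariant $2$-form with $\omega^3>0$, and it descends to a symplectic form on $\widehat M$. Its $(2,0)+(0,2)$ part is nonzero, so it is \emph{not} compatible with $J$ --- which is the whole point of the construction, since $M$ admits no K\"ahler metric. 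With such an $\omega$ in hand, the rest of your argument (quotient of a symplectic manifold by a finite group of symplectomorphisms is a symplectic orbifold) goes through as you describe.
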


\begin{proof}
The nilmanifold $M$ is a complex manifold whose complex structure $J$ is
the multiplication by $i$ at each tangent space $T_{p}M$, $p\in M$. 
Then one can check that $J$ commutes with the
$\ZZ_6$-action $\rho$ on $M$, that is, $(\rho_{*})_p\circ J_{p} = J_{\rho(p)}\circ (\rho_{*})_p$,
for any point $p\in M$. Hence, $J$ induces a complex structure on the quotient $\widehat M=M/\ZZ_{6}$.

Now we def{}ine the complex $2$-form $\omega$ on $M$ given by
 \begin{equation}\label{eqn:omega}
 \omega=- i \,\mu \wedge \bmu + \nu\wedge \theta + \bnu \wedge \btheta.
 \end{equation}
Clearly, $\omega$ is a real closed $2$-form on $M$ which satisf{}ies $\omega^3 > 0$,
that is, $\omega$ is a symplectic form on $M$. Moreover, $\omega$ is 
$\ZZ_{6}$-invariant. Indeed, 
$ \rho^*\omega=- i \,\mu \wedge \bmu + \zeta^6 \nu\wedge \theta +\zeta^{-6}\bnu \wedge \btheta=\omega$.
Therefore $\widehat{M}$ is a {symplectic $6$-orbifold}, with the symplectic form
$\widehat\omega$ induced by $\omega$. 
\end{proof}

We denote by
\[
 \widehat\pi\colon M\to \widehat M
\]
the natural projection. The orbifold points of $\widehat{M}$ are the following:
\begin{enumerate}
\item The points $(\frac13 a(1+\zeta), \frac13  b(1+\zeta),\frac13 c(1+\zeta)+\frac29 ab(1+\zeta)^2)\in M$, $a,b,c\in \{0,1,2\}$ and $(b,c)\neq (0,0)$, are points of order $3$, with 
isotropy group $K=\{\id,\rho^2,\rho^4\}$. These points are mapped in pairs by
$\ZZ_6$, so they def{}ine 12 orbifold points in $\widehat M=M/\ZZ_{6}$, with models $\CC^3/K$.
\item The surfaces $S_{(p,q)}=\{(u_1, p, p \, u_1 + q) \ | \ u_1\in \CC/\Lambda\}\subset M$,
where $p,q\in \{0, \frac12,\frac\zeta2,\frac{1+\zeta}2\}$, $(p,q)\neq (0,0)$. These are
$15$ tori, which consist of points of order $2$, with
isotropy $H=\{\id,\rho^3\}$. These surfaces are permuted by the group $\ZZ_6$, so 
they come in $5$ groups of three tori each. Thus they def{}ine $5$ tori in the 
orbifold $\widehat{M}$, formed by orbifold points of order $2$.

\item The surface $S_0=\{(u_1,0,0) \ | \ u_1\in \CC/\Lambda\}\subset M$ is a torus consisting generically of points of order $2$, with isotropy $H$. Here $\rho\colon S_0\to S_0$ and
it is a map of order $3$, with three f{}ixed points $(\frac13 a(1+\zeta), 0,0)$, $a=0,1,2$.
These points have isotropy $\ZZ_6$. The quotient $S_0/\la\rho\ra \subset \widehat{M}$ is homeomorphic to a sphere (with three orbifold points of order $3$).
\end{enumerate}

\section{Resolution of the $6$-orbifold} \label{sect:resolution}

Now we want to desingularize the orbifold $\widehat{M}$. We shall treat each
of the connected components of the singular locus determined before independently. Recall that $K=\{\id,\rho^2,\rho^4\}\cong \ZZ_3$ and $H=\{\id,\rho^3\}\cong \ZZ_2$.
There is a natural isomorphism $\la \rho \ra =\ZZ_6 \cong K\x H$.

\subsection{Resolution of the isolated orbifold points}

We know that there are 12 isolated orbifold points in $\widehat{M}$. Let $\widehat p\in \widehat M$ be one of them. 
The preimage of $\widehat p$ under $\widehat\pi$  consists of two points, $\widehat{\pi}^{-1}(\widehat p)=\{p_1, p_2\}$. The isotropy group of $p_1$ is $K$. Consider a $K$-invariant
neighbourhood $U$ of $p_1$ in $M$. Then, 
\[
 \widehat U=\widehat\pi(U) \cong U/K
\]
is an orbifold neighbourhood of $\widehat p$ in $\widehat M$.
This has complex and symplectic resolutions as in
Section \ref{sec:points}. In order to apply Theorem 
\ref{thm:resolutions} we check that $\imath=\imath'\colon K\to \U(3)$.
For the complex resolution, we have $\imath(\zeta^2)=
\diag (\zeta^2,\zeta^2, \zeta^4)$. For the symplectic
resolution, the symplectic form (\ref{eqn:omega}) is, in our coordinates $(u_1,u_2,u_3)$,
 \begin{equation}\label{eqn:omega2}
 \omega=- i \,du_1 \wedge d\bar u_1+ du_2\wedge du_3+ d\bar u_2\wedge d \bar u_3\, .
 \end{equation}
We have to do a change of variables to transform $K \subset \Sp(6,\RR)$
into a subgroup of $\U(3)$. This is obtained with 
 \begin{align*}
v_1 &= u_1 \\ 
v_2 &= \frac1{\sqrt2} (u_2 -i \bar u_3) \\
v_3 &= \frac1{\sqrt2} (\bar u_2 -i u_3 ).
 \end{align*}
This transforms (\ref{eqn:omega2}) into
\[
 \omega=- i \,dv_1 \wedge d\bar v_1 - i \,dv_2 \wedge d\bar v_2 - i \,dv_3 \wedge d\bar v_3,
\]
the standard K\"ahler form. 
The $K$-action is given by $(v_1,v_2,v_3)\mapsto (\zeta^2 v_1,\zeta^2 v_2, \zeta^4 v_3)$, so 
$\imath' (\zeta^2)=\diag (\zeta^2,\zeta^2, \zeta^4)$, and $\imath=\imath'$.

\subsection{Resolution of the singular sets $\widehat\pi(S_{(p,q)})$}

Now we consider a connected component of the singular set which is homeomorphic to a $2$-torus. There are $5$ such components in $\widehat{M}$, all of them are images by $\widehat\pi$ of the sets $S_{(p,q)}=\{(u_1, p, p \, u_1 + q) \ | \ u_1\in \CC/\Lambda\}$,
where $(p,q)\in I=\left(\{0, \frac12,\frac\zeta2,\frac{1+\zeta}s\}\right)^2-\{ (0,0)\}$.

Let us focus on one such component $\widehat T=\widehat\pi(T)$, 
$T\cong \CC/\Lambda$.
Then $H$ f{}ixes $S_{(p,q)}$,
and its orbit under $K$ is given by $S_{(p_i,q_i)}$, for three elements $(p_1,q_1)=(p,q), (p_2,q_2),(p_3,q_3) \in I$.
Consider a neighbourhood $U$ of $T\subset M$ via
\begin{eqnarray*}
 T \x B_\epsilon(0) & \to & U \\
 (u_1,u_2,u_3) &\mapsto&  (u_1,u_2+ p, u_3+ p \, u_1 + q) ,
\end{eqnarray*}
where $B_\epsilon(0)\subset \CC^2$. The image is
\begin{equation}\label{eqn:U}
\widehat U=\widehat\pi(U) \cong U/H \cong T \x (B_\epsilon(0)/H),
\end{equation}
where $H\cong \ZZ_2$ acts as $(u_2,u_3)\mapsto (-u_2,-u_3)$.

We see that the complex structure on (\ref{eqn:U}) is the product complex
structure. Also, the symplectic structure 
$\omega= i \,du_1\wedge d\bar u_1 + du_2\wedge du_3+ d\bar u_2 \wedge d\bar u_3$
is the product of the natural symplectic structure of $\CC/\Lambda$ with an orbifold
symplectic structure on $B_\epsilon(0)/H$.
Using the construction of Section \ref{sec:points}, we have a desingularization 
\[
\widetilde Y \to B_\epsilon(0)/H
\]
which is a smooth manifold endowed with both a complex structure 
and a symplectic structure coinciding with the given ones outside a 
small neighbourhood of the exceptional locus $E$. The condition $\imath=\imath'$ of
Theorem \ref{thm:resolutions} is trivially satisf{}ied, since $\imath(\rho^3)=\imath'(\rho^3)=-\id$.
Multiplying by $T=\CC/\Lambda$, we have that
\[
\widetilde U=T \x \widetilde Y
\] 
is a smooth manifold endowed with a complex structure $\widetilde J$,
and a symplectic structure $\widetilde \omega$, which 
coincide with those of $\widehat U$ outside a small neighbourhood of 
the exceptional locus $T\x E\subset \widetilde U$.

The complex and the symplectic resolutions of $\widehat M$ in a neighbourhood of $\widehat T$ are obtained by replacing
$\widehat U\subset \widehat M$ with $\widetilde U$.
The two resolutions are dif{}feomorphic by the considerations above.

\subsection{Resolution of the singular set $\widehat\pi(S_0)$}

Finally we consider the connected component of the singular set which is homeomorphic to a $2$-sphere.
This is $\widehat S_0=\widehat\pi(S_0)$, where $S_{0}=\{(u_1, 0,0) \ | \ u_1\in \CC/\Lambda\}$. As before, a neighbourhood of $S_0$ in $M$ is of the form 
\[
 U_0=(\CC/\Lambda) \x B_\epsilon(0),
\]
where $B_\epsilon(0)\subset \CC^2$. The action of $H=\ZZ_2$ is 
trivial on $\CC/\Lambda$ and as $\pm 1$ on $\CC^2$. 
The action of $K=\ZZ_3$ is of the form $\rho^2 (u_1,u_2,u_3)=(\zeta^2u_1,\zeta^2 u_2, \zeta^4 u_3)$. 

Let us focus on $B_\epsilon(0)/H$. By the construction of Section \ref{sec:points}, we have a 
complex desingularization $(\widetilde Y_c,\widetilde J) \to B_\epsilon(0)/H$. The
holomorphic action of $K$ on $B_\epsilon(0)$ induces an action on 
$(\widetilde Y_c,\widetilde J)$. Also, there is a symplectic desingularization 
$(\widetilde Y_s,\widetilde \omega) \to B_\epsilon(0)/H$. The action of $K$ on 
$B_\epsilon(0)$ induces an action on $(\widetilde Y_s,\widetilde \omega)$.
This follows by taking an orbifold chart of the singular point
that is $(H\x K)$-equivariant, using the equivariant Darboux theorem.

By Theorem \ref{thm:resolutions}, there is a dif{}feomorphism $F\colon(\widetilde Y_c,\widetilde J)\to (\widetilde Y_s,\widetilde \omega)$. Let us see that $F$ can be taken to 
be $K$-equivariant. This follows by the arguments in the proof of Theorem \ref{thm:resolutions}
by using that $\imath\colon H\x K \to \U(2)$ and $\imath'\colon H \x K \to \U(2)$ are equal. 
For the complex case, $\imath$ is given by the representation 
$(u_2,u_3)\mapsto (\zeta u_2, \zeta^5 u_3)$, so 
$\imath (\zeta)=\diag (\zeta, \zeta^5)$. For the symplectic case,
we have to do a change of variables to transform $H\x K \subset \Sp(4,\RR)$
into a subgroup of $\U(2)$. This is given by 
\[
v_2 = \frac1{\sqrt2} (u_2 -i \bar u_3),\quad v_3 = \frac1{\sqrt2} (\bar u_2 -i u_3 ),
\]
which transforms $\omega= du_2\wedge du_3+ d\bar u_2\wedge d \bar u_3$ into the
standard K\"ahler form $- i \,dv_2 \wedge d\bar v_2 - i \,dv_3 \wedge d\bar v_3$.
As $(v_2,v_3)\mapsto (\zeta v_2, \zeta^5 v_3)$, we have that 
$\imath' (\zeta)=\diag (\zeta, \zeta^5)$. Hence $\imath=\imath'$.

This produces a desingularization $\widetilde Y \to B_\epsilon(0)/H$ with a symplectic and a complex structure, which match the given ones outside a small neighbourhood of
the exceptional set $E\subset \widetilde Y$, which are compatible (they give a K\"ahler
structure) in a smaller neighbourhood of $E$, by Remark \ref{rem:kahler}, and which
have an action of $K$ preserving both the complex and symplectic structures.
A desingularization of 
\[
 U_0/H = (\CC/\Lambda) \x (B_\epsilon(0)/H)
\]
is given by substituting a neighbourhood of $\widehat S_0=(\CC /\Lambda) \x \{0\}$ by 
$(\CC /\Lambda) \x \widetilde Y$. The f{}ixed points of action of $K$ in $U_0/H$ lie 
on $\widehat S_0$, hence the f{}ixed points of the action of $K$ on the desingularization
of $U_0/H$ lie in the exceptional divisor. In this part of the manifold, we have
a K\"ahler structure, so the symplectic and complex desingularization are the same.

This means that $(U_0/H)/K \cong U_0/(H\x K)$ admits a desingularization $\widetilde V$ with
a complex and a symplectic structure. The resolution of $\widehat M$ in a neighbourhood of $\widehat S_0$
is obtained by substituting 
$\widehat\pi(U_0)=U_0/(H\x K) \subset \widehat M$ with $\widetilde V$.

All together, we get a smooth $6$-manifold $\widetilde M$
with a complex structure and a symplectic structure, and with a map
\[
 \pi\colon\widetilde M \too \widehat M,
\]
which is simultaneously a complex and a symplectic resolution.

\section{Topological properties of $\widetilde M$}

In this section, we are going to complete the proof of Theorem \ref{main} by proving
that $\widetilde M$ is simply-connected and that it does not admit a K\"ahler structure.

\begin{proposition}\label{prop:1-connected}
$\widetilde{M}$ is simply connected.
\end{proposition}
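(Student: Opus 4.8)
The plan is to compute $\pi_1(\widetilde M)$ by applying van Kampen's theorem repeatedly, comparing it with $\pi_1(\widehat M)$ and then with $\pi_1(M)$. The starting point is that $\widehat M = M/\ZZ_6$, so there is a surjection $\pi_1(M) \to \pi_1^{\mathrm{orb}}(\widehat M) \to \pi_1(\widehat M)$; since $M$ is the nilmanifold $\Gamma\backslash G$, the group $\pi_1(M)=\Gamma$ is known explicitly (generated by the translations by $1$ and $\zeta$ in the $u_1,u_2,u_3$ directions, with the single commutation relation coming from $d\theta=\mu\wedge\nu$). First I would argue that $\pi_1(\widehat M - S)$ is the quotient of $\Gamma$ by the normal subgroup generated by the $\ZZ_6$-action, but more usefully: $\pi_1(\widetilde M) = \pi_1(\widehat M \setminus S)\big/ N$, where $N$ is generated by the loops that become contractible once the exceptional divisors are glued in. So the computation splits into two tasks: (a) compute $\pi_1(\widehat M\setminus S)$, and (b) identify enough relations coming from the three types of exceptional loci (over the $12$ points, the $5$ tori, and the sphere $\widehat S_0$) to kill it.

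For task (a): since $S$ has real codimension $\geq 2$ in $\widehat M$, the inclusion $\widehat M\setminus S \hookrightarrow \widehat M$ induces a surjection on $\pi_1$, and in fact $\pi_1(\widehat M\setminus S)$ surjects onto $\pi_1(\widetilde M)$ as well, because $\widetilde M \setminus E \cong \widehat M\setminus S$ and $\widetilde M$ is obtained by gluing in the exceptional pieces. I would then use that $M\setminus \widehat\pi^{-1}(S) \to \widehat M\setminus S$ is an honest $\ZZ_6$-covering (the action is free there), so $\pi_1(\widehat M\setminus S)$ sits in an extension $1 \to \pi_1(M\setminus \widehat\pi^{-1}(S)) \to \pi_1(\widehat M\setminus S) \to \ZZ_6 \to 1$. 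Removing the codimension-$2$ singular tori from $M$ changes $\pi_1$ by adding meridian generators, but with $M$ nilpotent and $2$-step, the resulting group is still manageable.

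For task (b) — which I expect to be the main obstacle — the key is to exploit that the $\ZZ_6$-action was chosen precisely so that the resolution is simply connected, and to track the meridians. A loop going around a component of the exceptional locus over an orbifold point of order $n$ corresponds, downstairs, to a small loop around the singular stratum; in the orbifold chart $\CC^k/\Gamma_p$, this small loop is identified with the generator of $\Gamma_p$ acting on $\widetilde U_p$, hence with a specific element of $\ZZ_6$ (e.g. $\rho^2$ for the order-$3$ points, $\rho^3$ for the order-$2$ tori). The resolution $\widetilde Y$ of $\CC^k/\Gamma_p$ is simply connected (toric/ADE resolutions of quotient singularities are), so each such meridian dies in $\pi_1(\widetilde M)$. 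Thus gluing in the exceptional divisors imposes exactly the relations "$\rho^2 = 1$" (from the $12$ points), "$\rho^3 = 1$" (from the five tori, up to the $\ZZ_6$-permutation), and the relations from $\widehat S_0$ (where the resolution also carries the residual $K$-action and the three order-$6$ points contribute "$\rho=1$"). Since $\ZZ_6$ is generated by $\rho$ and $\rho^2,\rho^3$ together generate it (indeed $\gcd(2,3)=1$), one gets that the image of $\ZZ_6$ in $\pi_1(\widetilde M)$ is trivial. The remaining generators — the images of $\Gamma=\pi_1(M)$ — must then be killed too: here I would use that the generators of $\Gamma$ (translations in $u_1,u_2,u_3$) become, after introducing the meridian relations and the $\ZZ_6$-quotient, either conjugate to meridians or expressible via the nilpotent relation $[\,\cdot\,,\,\cdot\,]=$ meridian-type element, so they vanish once all meridians do. The careful bookkeeping of \emph{which} meridian each generator of $\Gamma$ maps to — and verifying that these suffice to trivialize the whole group — is the delicate part, and is exactly where the specific numerology of the action ($\zeta^4,\zeta,\zeta^5$ on $u_1,u_2,u_3$) and the specific lattice $\Lambda = \ZZ + \ZZ\zeta$ must be used. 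I would organize this as: present $\pi_1(\widehat M\setminus S)$ explicitly by generators (three "translation" classes plus meridians plus a $\ZZ_6$-generator) and relations, then add the meridian-killing relations, and finally simplify to the trivial group.
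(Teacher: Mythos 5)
Your overall framework (present $\pi_1(\widetilde M)$ as a quotient of $\pi_1(\widehat M\setminus S)$ by meridian relations, then kill everything) is legitimate in principle, and your treatment of the exceptional loci --- meridians die because the local resolutions of $\CC^3/K$ and $\CC^2/H$ are simply connected --- matches the second half of the paper's argument, which uses Verbitsky's theorem and a Seifert--Van Kampen comparison of $\widehat M$ with $\widetilde M$. But your proposal has a genuine gap at its center: after the meridian relations have killed the $\ZZ_6$-quotient, you still have to kill the image of $\Gamma=\pi_1(M)$, and this is precisely the step you defer as ``delicate bookkeeping.'' The mechanism you suggest --- that the translation generators become ``conjugate to meridians or expressible via the nilpotent relation'' --- is neither verified nor, as stated, the right one: the translation loops in the $u_1$-direction live on the torus $S_0$, which is itself contained in the singular locus, so they are not even loops in $\widehat M\setminus S$ without a perturbation, and the nilpotency of $\Gamma$ only expresses the $u_3$-translations as commutators of the $u_1$- and $u_2$-translations, which does nothing to kill the latter. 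So the proposal is a plan whose hardest step is missing.

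The paper's route is shorter and avoids computing $\pi_1(\widehat M\setminus S)$ altogether. First, since the $\ZZ_6$-action has a fixed point, $\pi_1(M)\to\pi_1(\widehat M)$ is already surjective (Armstrong's theorem, cited from Bredon) --- no meridians or orbifold fundamental groups needed. Then the generators of $\Gamma$ are carried by the three $2$-tori $\Sigma_1=\{(u_1,0,0)\}$, $\Sigma_2=\{(0,u_2,0)\}$, $\Sigma_3=\{(0,0,u_3)\}$, and the decisive observation is that each $\widehat\pi(\Sigma_i)$ is a $2$-\emph{sphere}: the quotient maps are branched covers of degree $3$ or $6$ whose ramification data forces genus $0$ by Riemann--Hurwitz (this is where the specific weights $\zeta^4,\zeta,\zeta^5$ and the lattice $\ZZ+\ZZ\zeta$ enter). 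Hence every generator of $\pi_1(\widehat M)$ bounds, and $\pi_1(\widehat M)=1$. Only then does one check that the resolution preserves $\pi_1$, which for the positive-dimensional strata also requires knowing that $\pi_1(E')\to\pi_1(S')$ is an isomorphism (e.g. $E'=T^2\times E$ with $E$ simply connected). If you want to salvage your approach, you would need to replace your vague step by an explicit null-homotopy of the translation loops, and the sphere argument is exactly that.
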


\begin{proof} 
We f{}ix base points $p_0 =(0,0,0)\in M$ 
and $\widehat p_0=\widehat\pi(p_0) \in \widehat M$. There is an
epimorphism of fundamental groups
\[
 \Gamma=\pi_1(M,p_0) \surj \pi_1(\widehat M,\widehat p_0),
\]
since the $\ZZ_6$-action has a f{}ixed point \cite[Chapter II, Corollary 6.3]{Bredon}. 
Now the nilmanifold $M$ is a principal $2$-torus
bundle over the $4$-torus $T^4$, so we have an exact sequence
\[
 \ZZ^2 \inc \Gamma \to \ZZ^4.
\]
The group $\Gamma=\pi_1(M,p_0)$ is thus generated by the images of the fundamental groups of 
the surfaces $\Sigma_1=\{(u_1,0,0)\}$, $\Sigma_2=\{(0,u_2,0)\}$ and $\Sigma_3=\{(0,0,u_3)\}$ in $M$. The
image 
$\widehat\pi(\Sigma_1)$ is a $2$-sphere, since 
$\widehat\pi\colon \Sigma_1\to \widehat\pi(\Sigma_1)$ is a degree $3$ map with three 
ramif{}ication points of order $3$ (namely $(\frac12 a(1+\zeta),0,0)$, with $a=0,1,2$). The image of $\Sigma_2$
is also a $2$-sphere, since $\widehat\pi\colon \Sigma_2\to\widehat\pi(\Sigma_2)$ is a degree $6$ map with one point of order $6$, $(0,0,0)$, two of order $3$, $(0,\frac12 b(1+\zeta),0)$, $b=1,2$, and three of order $2$ (namely $(0,p,0)$, $p=\frac12,\frac\zeta2,\frac{1+\zeta}2$).
Analogously, $\widehat\pi(\Sigma_3)$ is a $2$-sphere. This proves that $\pi_1(\widehat M, \widehat p_0)=\{1\}$.

Now we look at the resolution process. Let $S\subset\widehat M$ be the singular locus and suppose $p\in S$ is an isolated orbifold point. The
resolution replaces a neighbourhood $B=B_\epsilon(0)/\Gamma_p$ of $p$ with a smooth manifold $\widetilde B$, such that $\pi\colon\widetilde B\to B$ is a complex resolution of singularities. The manifold $\widetilde B$ is simply connected by \cite[Theorem 4.1]{Verbitsky}.
A Seifert-Van Kampen argument gives that $\pi_1(\widehat M)$ is the amalgamated sum of
$\pi_1(\widehat M-\{p\})$ and $\pi_1(B)$ along $\pi_1(\bd B)$. Also 
 $\pi_1(\widetilde M)$ is the amalgamated sum of
$\pi_1(\widetilde M-E)$ and $\pi_1(\widetilde B)$ along $\pi_1(\bd B)$. As $\pi_1(B)=\pi_1(\widetilde B)=\{1\}$,
we have that $\pi_1(\widehat M)=\pi_1(\widetilde M)$.

Suppose now that we have a connected component $S'$ of
the singular locus $S$ of positive dimension. Let $E'=\pi^{-1}(S')$ be the 
corresponding exceptional locus. The invariance of the fundamental group under resolution
is proved along the same lines as before if
we know that the map $\pi\colon E' \to S'$ induces an isomorphism $\pi_1(E')\to \pi_1(S')$.
In our case, we have two possibilities: if 
$S'=\widehat\pi(S_{(p,q)}) \cong T^2$, then $E'= T^2 \x E$,
where $E$ is the exceptional divisor of the resolution $\widetilde Y \to B_\epsilon(0)/H$, which is clearly simply connected, and the result follows.

The second possibility is $S'=\widehat\pi(S_0)$. In this case, the exceptional divisor over $S'$ is the exceptional divisor of the resolution of 
\[((\CC/\Lambda) \x (\CC^2/H))/K.
\]
The resolution of $\CC^2/H$ is done by blowing-up $\CC^2$ at the origin,
\[
 \widetilde\CC^2=\{(a,b, [u:v])\in \CC^2 \x \CP^1 \ | \ a v= b u \},
\]
and then quotienting by $H=\{\pm \id\}$. Clearly, the fundamental groups of
$(\CC/\Lambda) \x (\CC^2/H)$ and $(\CC/\Lambda) \x (\widetilde\CC^2/H)$ coincide.
The action of $K$ is given by $(a,b,[u:v])\mapsto ((\zeta^2 a, \zeta^4 b),[u:\zeta^2 v])$,
with f{}ixed points $(0,0,[1:0])$ and $(0,0,[0:1])$ 
The f{}ixed points of $K$ on $((\CC/\Lambda) \x ( \widetilde\CC^2/H)$ occur 
when $K$ f{}ixes both factors. Therefore, all f{}ixed points are isolated, and the
second resolution does not alter the fundamental group.
\end{proof}

In order to prove that $\widetilde{M}$ does not admit a K\"ahler structure, we are going to check that it does not satisfy the Lefschetz condition \emph{for any symplectic form}. For this, it is necessary to understand the cohomology $H^*(\widetilde{M})$. 

We start by computing the cohomology of $\widehat{M}$. 
By Nomizu theorem \cite{No}, the cohomology of the nilmanifold $M$ is:
\begin{align*}
 H^0(M,\CC) = \ & \la 1\ra,\\
 H^1(M,\CC) = \ & \la [\mu], [\bar\mu],[\nu],[\bar\nu]\ra,\\
 H^2(M,\CC) = \ & \la [\mu \wedge \bar\mu], [\mu \wedge\bar \nu],  [\bar\mu \wedge \nu], [\nu\wedge \bar\nu],
[\mu\wedge \theta], [ \bar\mu\wedge \bar\theta],  [\nu\wedge\theta],[ \bar\nu\wedge \bar\theta]\ra,\\
  H^3(M,\CC) = \ & \la [\mu\wedge\bar\mu\wedge \theta], [\mu\wedge\bar\mu\wedge \bar\theta],
 [\nu\wedge\bar\nu\wedge \theta],  [\nu\wedge\bar\nu\wedge \bar\theta],
 [\mu\wedge\nu\wedge \theta],  [\bar\mu \wedge \bar\nu \wedge \bar\theta]\\
 &  [\mu \wedge \bar\nu \wedge \theta],  [\mu \wedge \bar\nu \wedge \bar\theta],
 [\bar\mu \wedge \nu \wedge   \theta],[\bar\mu \wedge \nu \wedge \bar\theta]\rangle,\\
H^4(M,\CC) = \ & \la [\mu \wedge \bar\mu \wedge \nu \wedge \theta],
   [\mu \wedge \bar\mu \wedge \bar\nu \wedge \bar\theta],
  [\bar\mu \wedge \nu \wedge \bar\nu \wedge \bar\theta],
 [\mu \wedge \nu \wedge \bar\nu \wedge \theta],\\
  & [\mu \wedge \bar\mu \wedge \theta \wedge \bar\theta],
   [\nu \wedge \bar\nu \wedge \theta \wedge \bar\theta],
   [\mu \wedge \bar\nu \wedge \theta \wedge \bar\theta],
   [\bar\mu \wedge \nu \wedge \theta \wedge \bar\theta]\ra ,
 \end{align*}
\begin{align*}
 H^5(M,\CC) = \ & \la [\mu \wedge \bar\mu \wedge \nu \wedge
  \theta \wedge \bar\theta],
 [\mu \wedge \bar\mu \wedge \bar\nu \wedge
  \theta \wedge \bar\theta],
  [\mu \wedge \nu \wedge \bar\nu \wedge
  \theta \wedge \bar\theta], [\bar\mu \wedge \nu \wedge \bar\nu \wedge
  \theta \wedge \bar\theta] \ra, \\
  H^6(M,\CC) =\ & \la [\mu \wedge \bar\mu \wedge \nu \wedge \bar\nu
  \wedge \theta \wedge \bar\theta]\ra.
\end{align*}
So the cohomology $H^*(\widehat{M})=H^*(M)^{\ZZ_6}$ of $\widehat{M}$ is:
\begin{align*}
H^0(\widehat{M},\CC) = \ & \la 1\ra,\\
H^1(\widehat{M},\CC) = \ &0,\\
H^2(\widehat{M},\CC) = \ & \la [\mu \wedge \bar\mu],  [\nu\wedge \bar\nu],
 [\nu\wedge\theta],[\bar\nu\wedge \bar\theta]\ra,\\
H^3(\widehat{M},\CC) = \ & 0,\\
H^4(\widehat M,\CC) = \ & \la [\mu \wedge \bar\mu \wedge \nu \wedge \theta],
   [\mu \wedge \bar\mu \wedge \bar\nu \wedge \bar\theta],
 [\mu \wedge \bar\mu \wedge \theta \wedge \bar\theta],
   [\nu \wedge \bar\nu \wedge \theta \wedge \bar\theta]\ra ,\\
H^5(\widehat{M},\CC) = \ & 0 , \\
H^6(\widehat{M},\CC) = \ & \la [\mu \wedge \bar\mu \wedge \nu \wedge \bar\nu
  \wedge \theta \wedge \bar\theta]\ra.
\end{align*}
 
\begin{proposition}\label{non-Lefschetz}
$\widetilde{M}$ does not admit a K\"ahler structure since it does not satisfy the Lefschetz property for any symplectic form on $\widetilde{M}$.
\end{proposition}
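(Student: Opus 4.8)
The plan is to show that for \emph{every} symplectic form $\Omega$ on $\widetilde M$, the Lefschetz map $\LO\colon H^2(\widetilde M,\RR)\to H^4(\widetilde M,\RR)$ fails to be an isomorphism; since $\widetilde M$ is simply connected (Proposition \ref{prop:1-connected}) and $6$-dimensional, and is formal by \cite[Theorem 3.2]{FM2} with $b_1=0$ and $b_3$ even, this non-Lefschetz property is the only remaining obstruction to rule out, and it suffices to conclude there is no K\"ahler metric. The key is to produce a fixed, nonzero class $[\beta]\in H^2(\widetilde M,\RR)$ coming from the orbifold such that $[\beta]\smile[\alpha_1]\smile[\alpha_2]=0$ for all $[\alpha_1],[\alpha_2]\in H^2(\widetilde M,\RR)$. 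Granting such a $\beta$, for any symplectic $\Omega$ we write $[\Omega]=[\alpha]$ with $[\alpha]\in H^2(\widetilde M,\RR)$; then $\LO^2[\beta]=[\beta]\smile[\Omega]\smile[\Omega]=0$, so $\LO^2$ is not injective, hence $\LO\colon H^2\to H^4$ is not an isomorphism (the Lefschetz property would force $\LO$ to be an isomorphism, hence $\LO^2\colon H^2\to H^6$ to be injective on the primitive-free part — more simply, $\Omega^3>0$ integrates to a positive number while pairing $[\beta]\smile[\Omega]\smile[\Omega]$ trivially vanishes, obstructing bijectivity of $\LO$).

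\textbf{Step 1: compute $H^*(\widetilde M)$.} I would first determine $H^2(\widetilde M,\RR)$ and $H^4(\widetilde M,\RR)$ from the resolution. Since $\pi\colon\widetilde M\to\widehat M$ is a resolution, there is a decomposition $H^*(\widetilde M)\cong \pi^*H^*(\widehat M)\oplus (\text{classes supported near }E)$, the second summand being built from the cohomology of the exceptional divisors over the three types of singular strata (the $12$ isolated $\CC^3/K$ points, the $5$ tori $\widehat\pi(S_{(p,q)})$, and the sphere $\widehat S_0$). Using the Nomizu-computed $H^*(\widehat M)=H^*(M)^{\ZZ_6}$ listed above (so $\pi^*H^2(\widehat M)=\la[\mu\wedge\bar\mu],[\nu\wedge\bar\nu],[\nu\wedge\theta],[\bar\nu\wedge\bar\theta]\ra$ with the reality condition cutting this to a real $4$-dimensional space, and similarly $\pi^*H^4(\widehat M)$ is real $4$-dimensional), and a Mayer--Vietoris / blow-up formula for each exceptional piece, I obtain explicit bases. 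The exceptional contributions in degree $2$ come from $\PP^1$ or $\PP^2$ factors in the resolutions; I would track exactly which of these pair nontrivially with the degree-$4$ generators.

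\textbf{Step 2: exhibit $[\beta]$.} The natural candidate is $[\beta]=\pi^*[\mu\wedge\bar\mu]$ (a real class up to the factor $i$, i.e. the pullback of the orbifold class $i[\mu\wedge\bar\mu]$). On $M$ one has, for any two invariant $2$-forms $\alpha_1,\alpha_2$, that $\mu\wedge\bar\mu\wedge\alpha_1\wedge\alpha_2$ lies in the $\ZZ_6$-invariant part of $\bigwedge^6$, and inspection of the generators of $H^4(M)^{\ZZ_6}$ shows $(\mu\wedge\bar\mu)\smile(\text{any degree-}4\text{ invariant class})$ lands only on $[\mu\wedge\bar\mu\wedge\nu\wedge\bar\nu\wedge\theta\wedge\bar\theta]$, and since $\mu\wedge\bar\mu\wedge\mu\wedge\bar\mu=0$, etc., one checks $(\mu\wedge\bar\mu)\smile\alpha_1\smile\alpha_2$ vanishes for all invariant $\alpha_i$ \emph{in the image of $\pi^*$}. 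I then need the same vanishing against the exceptional classes: because the exceptional divisors over $\widehat\pi(S_{(p,q)})$, $\widehat S_0$ and the isolated points are all disjoint from (a representative of) $\mu\wedge\bar\mu$ supported away from them, and the triple product $[\beta]\smile[\alpha_1]\smile[\alpha_2]$ is computed by integration against representatives that can be chosen with disjoint support, the exceptional cross-terms vanish too. This is where care is needed: one must verify that $[\mu\wedge\bar\mu]$ pulls back to a class pairing to zero with \emph{every} product of two exceptional (or mixed) degree-$2$ classes, not merely the pulled-back ones.

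\textbf{Main obstacle.} The delicate point is Step 1 together with the disjoint-support argument in Step 2: I must control the full ring structure of $H^*(\widetilde M)$ near the three exceptional loci and confirm that no new degree-$2$ class $\gamma$ arising from the resolution satisfies $\int_{\widetilde M}\beta\smile\gamma\smile\gamma'\neq 0$. The isolated-point and product-torus resolutions are handled by standard blow-up cohomology, but the sphere stratum $\widehat S_0$ — resolved by blowing up $\CC^2$, quotienting by $H$, then quotienting by $K$ with two further isolated fixed points to resolve — requires assembling several Mayer--Vietoris steps, and one must check the intersection form on the resulting exceptional divisor does not couple to $\beta$. Once the disjoint-support / localization argument is in place, the vanishing $[\beta]\smile[\alpha_1]\smile[\alpha_2]=0$ for all $[\alpha_1],[\alpha_2]\in H^2(\widetilde M,\RR)$ follows, and since $[\beta]\neq 0$ in $H^2(\widetilde M,\RR)$ (it is a nonzero pulled-back orbifold class, as $\pi^*$ is injective on $H^*(\widehat M)$), we conclude $\LO$ is never an isomorphism, so $\widetilde M$ carries no K\"ahler metric. $\qed$
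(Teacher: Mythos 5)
Your overall strategy --- produce a nonzero $[\beta]\in H^2(\widetilde M)$ with $[\beta]\wedge[\alpha_1]\wedge[\alpha_2]=0$ for all $[\alpha_1],[\alpha_2]\in H^2(\widetilde M)$, then conclude via Poincar\'e duality that $[\beta]\in\ker L_{[\Omega]}$ for every symplectic form $\Omega$ --- is exactly the paper's, and your decomposition of $H^2(\widetilde M)$ into pulled-back classes plus classes supported near the exceptional locus matches the paper's Mayer--Vietoris argument in compactly supported cohomology. However, your concrete choice $[\beta]=\pi^*[\mu\wedge\bar\mu]$ is wrong, for two independent reasons. First, the required vanishing already fails on the pulled-back classes: $[\mu\wedge\bar\mu]\wedge[\nu\wedge\theta]\wedge[\bar\nu\wedge\bar\theta]=-[\mu\wedge\bar\mu\wedge\nu\wedge\bar\nu\wedge\theta\wedge\bar\theta]$ is a nonzero multiple of the volume class of $\widehat M$, and both $[\nu\wedge\theta]$ and $[\bar\nu\wedge\bar\theta]$ lie in $H^2(\widehat M)$; so your claim that $(\mu\wedge\bar\mu)$ annihilates all products of invariant classes is false. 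Second, the disjoint-support step collapses for this class: every positive-dimensional singular stratum ($S_{(p,q)}$ and $S_0$) is parametrized by the coordinate $u_1$, so $\mu\wedge\bar\mu=du_1\wedge d\bar u_1$ restricts to the area form of each of these tori; its class therefore pairs nontrivially with the strata and cannot be represented by a form supported away from the exceptional divisors.

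The class that works is $[\beta]=\Psi([\nu\wedge\bar\nu])$, where $\Psi=\pi^*\circ\widehat h^*$ for a suitable $\ZZ_6$-equivariant contraction $h$ of neighbourhoods of the singular strata. Algebraically, every generator of $H^2(\widehat M)$ other than $[\mu\wedge\bar\mu]$ contains a factor $\nu$ or $\bar\nu$, so any triple product of $[\nu\wedge\bar\nu]$ with two further generators either repeats $\mu\wedge\bar\mu$ or repeats $\nu$ or $\bar\nu$, hence vanishes. Geometrically, $\nu=du_2$ restricts to zero on every $S_{(p,q)}$ and on $S_0$, so $\widehat h^*(\nu\wedge\bar\nu)$ vanishes on a neighbourhood of the singular locus and $[\beta]$ restricts to zero on $\widetilde V=\pi^{-1}(V)$; this is what makes the products with classes from $H^2_c(\widetilde V)$ vanish without computing the full ring structure of the exceptional divisors, which your Step 1 proposes to do but which is unnecessary. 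With this correction the rest of your argument ($L_{[\Omega]}[\beta]=0$ by Poincar\'e duality, and $[\beta]\neq 0$) goes through.
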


\begin{proof}
Let $\Omega$ be a symplectic form on $\widetilde{M}$. The Lefschetz map $L_{[\Omega]} \colon H^2(\widetilde M) \to H^4(\widetilde M)$ is given by the cup product with $[\Omega]$. We show that there 
is a class $[\beta]\in H^2(\widetilde M)$ which is in the kernel of $L_{[\Omega]}$. We prove this by checking that $[\Omega]\wedge [\beta]\wedge [\alpha]=0$, for any $2$-form $[\alpha]\in 
H^2(\widetilde M)$.

We need to determine the cohomology $H^2(\widetilde M)$. For this, the f{}irst step is to construct a map $H^2(\widehat M) \to H^2(\widetilde M)$. Let $h\colon M\to M$ be a map which:
 \begin{itemize}
  \item is the identity outside small neighbourhoods of each point with non-trivial isotropy,
  \item contracts a neighbourhood of each of the isolated 24 points with isotropy $K$ onto the corresponding point,
 \item contracts a neighbourhood of each $S_{(p,q)}$ onto $S_{(p,q)}$ (f{}ixing $S_{(p,q)}$ pointwise),
 \item in a neighbourhood of $S_0$, is the composition of a contraction onto $S_0$ with a map 
   that contracts neighbourhoods (in $S_0$) of the $3$ f{}ixed points to the points, and
\item is $\ZZ_6$-equivariant.
\end{itemize}

$h$ induces a map $\widehat h\colon \widehat M\to\widehat M$. 
Note that for any closed form $\alpha \in \Omega^*(\widehat M)$, $\widehat h^*(\alpha) \in \Omega^*(\widehat M)$ is cohomologous to $\alpha$ and can be lifted to a form $\pi^*\widehat 
h^*(\alpha)\in\Omega^*(\widetilde M)$, where $\pi\colon\widetilde M \to \widehat M$ is the resolution map. This induces a well-def{}ined map
\[
\Psi= \pi^*\circ \widehat h^*\colon H^*(\widehat M) \to H^*(\widetilde M).
\]
Now consider $U= \widehat M-S$, where $S\subset\widehat M$ is the singular locus
and $V\subset \widehat M$ is a small neighbourhood of $S$. Let also $\widetilde U=\pi^{-1}(U)$ and $\widetilde V=\pi^{-1}(V) \subset \widetilde M$. Using compactly supported de Rham cohomology, we have a diagram
\[
  \begin{array}{cccccccccccccccc}
  H_c^2(U)\oplus H_c^2(V)  &\to & H_c^2(\widehat M) &\to & H_c^3(U\cap V) & \to & H_c^3(U)\oplus H_c^3(V) \\
   \downarrow=\quad \Psi\downarrow && \,\,\downarrow\Psi && \downarrow\cong && \downarrow=\quad \Psi\downarrow \\
  H_c^2(\widetilde U)\oplus H_c^2(\widetilde V) &\to & 
   H_c^2(\widetilde M) &\to & H_c^3(\widetilde U\cap \widetilde V) & \to & H_c^3(\widetilde U)\oplus H_c^3(\widetilde V) \\
  \end{array}
\]
Since $V$ retracts onto a set of dimension $2$, $H^3(V)=0$. By Poincar\'e duality, $H^3_c(V)=0$ as well.
Now a simple diagram chasing proves that $H^2(\widetilde M)=H^2_c(\widetilde M)$ is generated by $H^2(\widehat M)=
H^2_c(\widehat M)$ and 
$H^2_c(\widetilde V)$.

Consider the closed form $\nu\wedge \bnu\in \Omega^2(\widehat M)$.
Since $\nu\wedge\bnu|_{S_{(p,q)}}=0$ for any surface $S_{(p,q)}$ and 
$\nu\wedge\bnu|_{S_0}=0$ as well, the $2$-cohomology class 
\[
[\beta]=\Psi([\nu\wedge\bnu])
\] 
vanishes on $\widetilde V$. Clearly $[\beta]\wedge [\alpha_1]\wedge [\alpha_2]=0$ if either
$[\alpha_1],[\alpha_2]\in H_c^2(\widetilde V)$.
Moreover, one can check that $[\beta]\wedge [\alpha_1]\wedge [\alpha_2]=0$, for $[\alpha_1],[\alpha_2]\in H^2(\widehat{M})$, which completes the proof.
\end{proof}


\end{document}